\newtheorem{theorem}{Theorem}[section]
\newtheorem{lemma}{Lemma}[section]
\theoremstyle{definition}
\newtheorem{definition}{Definition}[section]
\newtheorem{example}{Example}[section]
\theoremstyle{remark}
\newtheorem{remark}{Remark}[section]
\theoremstyle{corollary}
\theoremstyle{errata}
\newtheorem{errata}{Errata}
\begin{document}

\title{The equivariant de Rham complex on a simplicial $G_*$-manifold}
\author{Naoya Suzuki}
\date{}
\maketitle

\footnote[0]{
2010 Mathematics Subject Classification. 53-XX.

~~Key words and phrases. Simplicial manifold.

}

\begin{abstract}
We show that when a simplicial Lie group acts on a simplicial manifold $\{X_*\}$, we can construct a bisimplicial manifold and the de Rham complex on it.
This complex is quasi-isomorphic to the equivariant simplicial de Rham complex on $\{X_*\}$ and its cohomology group is isomorphic to the cohomology group of the 
fat realization of the bisimplicial manifold. We also exhibit a cocycle in the equivariant simplicial de Rham complex.
\end{abstract}

\section{Introduction}
Simplicial manifold is a sequence of manifolds together
with face and degeneracy operators satisfying some relations. There is a well-known way to construct the de Rham complex on a simplicial manifold  (see \cite{Bot2}\cite{Dup2}\cite{Mos}, for instance).
In \cite{Mei}, Meinrenken introduced the equivariant version of the de Rham complex on a simplicial manifold. That is a double complex 
whose components are equivariant differential forms which is called the Cartan model \cite{Ber}. This complex is a generalization of Weinstein's one in \cite{Wein}. In this paper, we show that when a simplicial Lie group acts on a simplicial manifold $\{X_*\}$, we can construct a bisimplicial manifold and explain that the de Rham complex on it is quasi-isomorphic to the equivariant de Rham complex on $\{X_*\}$. We explain also that its cohomology group is isomorphic  to the cohomology group of the fat realization of the bisimplicial manifold. At the last section, we exhibit a cocycle in the equivariant de Rham complex on a simplicial manifold $ NSO(4)$.
\\

\section{Review of the simplicial de Rham complex}

\subsection{Simplicial manifold}

\begin{definition}[\cite{Seg}]
Simplicial manifold is a sequence of manifolds
$X=\{ {X}_{q} \} , (q=0,1,2 \cdots )$ together with face operaters ${\varepsilon}_{i} : {X}_{q} \rightarrow {X}_{q-1}~ (i=0,1,2 \cdots q)$
and degeneracy operater ${\eta}_{i} : {X}_{q} \rightarrow {X}_{q+1}~ (i=0,1,2 \cdots q)$ which are all smooth maps and satisfy the following identities:

$${\varepsilon}_{i}{\varepsilon}_{j}={\varepsilon}_{j-1}{\varepsilon}_{i}  \qquad i<j$$
$${\eta}_{i}{\eta}_{j}={\eta}_{j+1}{\eta}_{i}  \qquad i \leq j$$
$$
{\varepsilon}_{i}{\eta}_{j}=\begin{cases}
{\eta}_{j-1}{\varepsilon}_{i}  &  i<j \\
id  &  i=j , \enspace i=j+1 \\
{\eta}_{j}{\varepsilon}_{i-1}  &  i>j+1.
\end{cases}
$$
\end{definition}

Simplicial Lie group $\{ G_* \}$ is a simplicial manifold such that all $G_n$ are Lie groups and all face and degeneracy operators are group homomorphisms.\\

For any Lie group $G$, we have simplicial manifolds $NG$, $PG$ and simplicial $G$-bundle  $\gamma : PG \rightarrow NG$
as follows:\\
\par
$NG(q)  = \overbrace{G \times \cdots \times G }^{q-times}  \ni (g_1 , \cdots , g_q ) :$  \\
face operators \enspace ${\varepsilon}_{i} : NG(q) \rightarrow NG(q-1)  $
$$
{\varepsilon}_{i}(g_1 , \cdots , g_q )=\begin{cases}
(g_2 , \cdots , g_q )  &  i=0 \\
(g_1 , \cdots ,g_i g_{i+1} , \cdots , g_q )  &  i=1 , \cdots , q-1 \\
(g_1 , \cdots , g_{q-1} )  &  i=q
\end{cases}
$$

\par
\medskip
$PG (q) = \overbrace{ G \times \cdots \times G }^{q+1 - times} \ni (\bar{g}_1 , \cdots , \bar{g}_{q+1} ) :$ \\
face operators \enspace $ \bar{\varepsilon}_{i} : PG(q) \rightarrow PG(q-1)  $ 
$$ \bar{{\varepsilon}} _{i} (\bar{g}_1 , \cdots , \bar{g}_{q+1} ) = (\bar{g}_1 , \cdots , \bar{g}_{i} , \bar{g}_{i+2}, \cdots , \bar{g}_{q+1})  \qquad i=0,1, \cdots ,q $$

\par
\medskip
Degeneracy operators are also defined but we do not need them here.\\

We define $\gamma : PG \rightarrow NG $ as $ \gamma (\bar{g}_1 , \cdots , \bar{g}_{q+1} ) = (\bar{g}_1 {\bar{g}_2}^{-1} , \cdots , \bar{g}_{q} {\bar{g}_{q+1}}^{-1} )$.\\

For any simplicial manifold $\{ X_* \}$, we can associate a topological space $\parallel X_* \parallel $ 
called the fat realization defined as follows:
$$  \parallel X_* \parallel  := \coprod _{n}  {\Delta}^{n} \times X_n / \enspace ( {\varepsilon}^{i} t , x) \sim (  t , {\varepsilon}_{i} x).$$
Here ${\Delta}^{n}$ is the standard $n$-simplex and ${\varepsilon}^{i}$ is a face map of it.
It is well-known that 
$\parallel \gamma \parallel : \parallel PG \parallel \rightarrow \parallel NG \parallel$ is the universal bundle $EG \rightarrow BG$  (see \cite{Dup2} 
\cite{Mos} \cite{Seg}, for instance). \\

\subsection{The double complex on a simplicial manifold}

\begin{definition}
For any simplicial manifold $ \{ X_* \}$ with face operators $\{ {\varepsilon}_* \}$, we have a double complex ${\Omega}^{p,q} (X_*) := {\Omega}^{q} (X_p) $ with derivatives defined as follows:
$$ d' := \sum _{i=0} ^{p+1} (-1)^{i} {\varepsilon}_{i} ^{*}  , \qquad  d'' := (-1)^{p} \times {\rm the \enspace exterior \enspace differential \enspace on \enspace }{ \Omega ^*(X_p) } .$$

\end{definition}
\bigskip

For any simplicial manifold the following holds.

\begin{theorem}[\cite{Bot2} \cite{Dup2} \cite{Mos}]
 There exist a ring isomorphism

$$ H^*({\Omega}^{*} (X_*))  \cong  H^{*} (\parallel X_* \parallel).  $$

 Here ${\Omega}^{*} (X_*)$  means the total complex.
\end{theorem}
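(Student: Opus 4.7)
The plan is to follow the classical strategy due to Dupont (and Bott--Shulman, Mostow): interpose the complex of simplicial de Rham forms $A^*(X_*)$, whose elements are compatible families $\omega_p \in \Omega^*(\Delta^p \times X_p)$ satisfying $(\varepsilon^i \times \mathrm{id})^* \omega_p = (\mathrm{id} \times \varepsilon_i)^* \omega_{p-1}$ for every face map, equipped with the total differential $d_\Delta + d_X$ and the wedge product inherited from ordinary forms. Two maps are then constructed: a fiber-integration map $I : A^*(X_*) \to \Omega^*(X_*)$ into the double complex, and a de Rham-style map $J : A^*(X_*) \to S^*(\parallel X_* \parallel)$ into singular cochains on the fat realization. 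Each is shown to be a quasi-isomorphism, and composing $J$ with a cohomology-level inverse of $I$ yields the claimed isomorphism.

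For $I$, set $I(\omega_p) := \int_{\Delta^p} \omega_p \in \Omega^{*}(X_p)$. Stokes' theorem on $\partial \Delta^p$ together with the face-compatibility of $\omega$ converts $d_\Delta + d_X$ into $d' + d''$, so $I$ is a chain map. To prove it is a quasi-isomorphism I would filter both sides by the simplicial degree $p$: the $E_1$-page of $\Omega^{*}(X_*)$ is $\Omega^{*}(X_p)$ tautologically, while on the $A^*(X_*)$-side a relative Poincar\'e lemma for $(\Delta^p, \partial \Delta^p)$ collapses each fiber direction and again produces $\Omega^{*}(X_p)$, with $I$ inducing the identity on $E_1$. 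The analogous argument, matching the skeletal filtration of $\parallel X_* \parallel$ with the $p$-filtration on $A^*(X_*)$, shows that $J$ is a quasi-isomorphism; this is Dupont's simplicial de Rham theorem, and the spectral sequence comparison lemma finishes both halves.

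The principal obstacle is the ring statement. Fiber integration $I$ is not a strict algebra map because $\int_{\Delta^p} (\alpha \wedge \beta)$ is not the product $(\int_{\Delta^p} \alpha)(\int_{\Delta^p} \beta)$ in general, so the induced cup product on $H^*(\Omega^*(X_*))$ must be compared with the honest wedge product on $A^*(X_*)$ via an explicit cochain homotopy, traditionally built from an Eilenberg--Zilber shuffle. The map $J$ satisfies the analogous compatibility through the Alexander--Whitney/shuffle identities on singular cochains. Once the two homotopies are assembled, multiplicativity descends to cohomology and the ring isomorphism $H^*(\Omega^*(X_*)) \cong H^*(\parallel X_* \parallel)$ follows.
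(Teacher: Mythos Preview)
The paper does not actually prove this theorem: it is stated with attribution to \cite{Bot2}, \cite{Dup2}, \cite{Mos} and closed immediately with a $\Box$. Your proposal is precisely the classical argument found in those references (especially Dupont's simplicial de Rham theorem in \cite{Dup2}): interpose the compatible-forms complex $A^*(X_*)$, compare via fiber integration and via evaluation on singular simplices, and run the filtration/spectral-sequence comparison to get quasi-isomorphisms, with the ring structure handled by the Eilenberg--Zilber/shuffle homotopies. So your approach is correct and coincides with the proof the paper is citing rather than supplying.
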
 
$\hspace{30em} \Box $

\section{Simplicial $G_*$-manifold}

Let $\{ X_* \}$ be a simplicial manifold and $\{ G_* \}$ be a simplicial Lie group which acts on $\{ X_* \}$ by left, i.e. 
 $G_n$ acts on $X_n$ by left and this action is commutative with face and degeneracy operators of $\{X_*\}$. We call $\{ X_* \}$ a simplicial $G_*$-manifold.

A bisimplicial manifold is a sequence of manifolds with horizontal and vertical face and degeneracy operators which commute with each other.

Given a simplicial $G_*$-manifold $\{ X_* \}$, we can construct a bisimplicial manifold $\{ X_* \rtimes NG_*(*) \}$ in the following way:\\

$X_p \rtimes NG_p(q):=X_p \times \overbrace{G_p \times \cdots \times G_p}^{q-{\rm times}}.$\\

Horizontal face operators \enspace ${\varepsilon}_{i}^{Ho} : X_p \rtimes NG_p(q) \rightarrow X_{p-1}  \rtimes NG_{p-1}(q) $ are the same as the face operators of $X_p$ and $G_p$.
Vertical face operators \enspace ${\varepsilon}_{i}^{Ve} : X_p \rtimes NG_p(q) \rightarrow X_p  \rtimes NG_p(q-1) $ are
$$
{\varepsilon}_{i}^{Ve}(x, g_1 , \cdots , g_q )=\begin{cases}
(x, g_2 , \cdots , g_q )  &  i=0 \\
(x, g_1 , \cdots ,g_i g_{i+1} , \cdots , g_q )  &  i=1 , \cdots , q-1 \\
(g_{q}x, g_1 , \cdots , g_{q-1} )  &  i=q.
\end{cases}
$$

\begin{example}
Suppose $G_n=H$ is a compact subgroup of $G$ and $H$ acts on $NG(n)$ as follows:
$$h \cdot (g_1, g_2, \cdots, g_n)  = (hg_1h^{-1}, hg_2h^{-1}, \cdots, hg_nh^{-1}).$$
Then $X_n=NG(n)$ is a simplicial $H$-manifold and 
$\parallel NG(*) \rtimes NH(*) \parallel$ is $B(G \rtimes H)$ (\cite{Suz}).

\end{example}

\begin{example}
$PG(n)$ acts on $PG(n)$ itself by left as follows:
$$(\bar{k}_1, \cdots, \bar{k}_{n+1}) \cdot (\bar{g}_1, \cdots, \bar{g}_{n+1}) = (\bar{k}_1\bar{g}_1\bar{k}^{-1}_1, \cdots, \bar{k}_{n+1}\bar{g}_{n+1}\bar{k}^{-1}_{n+1}).$$
So $PG(*)$ is a simplicial $PG(*)$-manifold (\cite{Mei}). If $G$ is compact, $\parallel PG(*) \rtimes N(PG(*))(*) \parallel$ is a fat realization
of a simplicial space $PG(n) \times _{PG(n)} EPG(n)$.

\end{example}

\begin{example}
If the action of $\{ G_* \}$ on $\{ X_* \}$ is trivial, $\parallel X_* \rtimes NG_*(*) \parallel$ is $\parallel X_* \parallel \times \parallel BG_* \parallel$.
\end{example}

\begin{example}
Let $\Gamma_1 \rightrightarrows \Gamma_0$ be a $G$-groupoid, i.e. both $\Gamma_1$ and $\Gamma_0$ are 
$G$-manifolds and all structure maps are $G$-equivariant.
We define a simplicial manifold $N\Gamma$ as follows:
$$N\Gamma(p)  := \{ (x_1, \cdots , x_p) \in \overbrace{\Gamma_1 \times \cdots \times \Gamma_1 }^{p-times}  \enspace | \enspace t(x_{j})=s(x_{j+1}) \enspace j=1 , \cdots , p-1 \}$$  
face operators \enspace ${\varepsilon}_{i} : N\Gamma(p) \rightarrow N\Gamma(p-1)  $
$$
{\varepsilon}_{i}(x_1 , \cdots , x_p )=\begin{cases}
(x_2 , \cdots , x_p )  &  i=0 \\
(x_1 , \cdots ,m(x_i ,x_{i+1}) , \cdots , x_p )  &  i=1 , \cdots , p-1 \\
(x_1 , \cdots , x_{p-1} )  &  i=p .
\end{cases}
$$
Here $s,t,m$ mean the source and target maps, and the multiplication (\cite{Sti}). Then $N\Gamma(*)$ is a simplicial $G$-manifold.

\end{example}
\section{The equivariant simplicial de Rham complex}
\subsection{The triple complex}

\begin{definition}
For a bisimplicial manifold $\{ X_{*,*} \}$, we can construct a triple complex on it in the following way:

$${\Omega}^{p,q,r} (X_{*,*}) := {\Omega}^{r} (X_{p,q}) $$

Derivatives are:
$$ d' := \sum _{i=0} ^{p+1} (-1)^{i} ({{\varepsilon}^{Ho} _{i}}) ^{*}  , \qquad  d'' := \sum _{i=0} ^{q+1} (-1)^{i} ({{\varepsilon}^{Ve} _{i}}) ^{*} \times (-1)^{p} $$
$$ d''' :=  (-1)^{p+q} \times {\rm the \enspace exterior \enspace differential \enspace on \enspace }{ \Omega ^*(X_{p,q}) }.$$

\end{definition}

Repeating the same argument in \cite{Suz}, we obtain the following theorem.
\begin{theorem}
There exists an isomorphism 

$$ H({\Omega}^{*} (X_* \rtimes NG_*(*))) \cong  H^{*} (\parallel X_* \rtimes NG_*(*) \parallel).$$

 Here ${\Omega}^{*} (X_* \rtimes NG_*(*))$  means the total complex.
\end{theorem}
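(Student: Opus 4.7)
The plan is to generalize the Bott--Dupont--Mostow proof of Theorem 2.1 from the simplicial to the bisimplicial setting by introducing an intermediate complex of ``Dupont forms'' on the fat realization and showing that it is quasi-isomorphic to both sides of the claimed isomorphism. Throughout I write $X_{p,q} := X_p \rtimes NG_p(q)$.

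First I would define an auxiliary triple complex $A^{p,q,r}$ whose elements are smooth $r$-forms on $\Delta^p \times \Delta^q \times X_{p,q}$ compatible with the horizontal face maps (via the cofaces $\varepsilon^i$ of $\Delta^p$ and $\varepsilon_i^{Ho}$ on $X_{p,q}$) and with the vertical face maps in the analogous $q$-direction. A standard partition-of-unity argument identifies the total complex of $A^{*,*,*}$ with the de Rham complex of $\parallel X_{*,*}\parallel$ in the sense of Dupont, so its cohomology is $H^{*}(\parallel X_{*,*}\parallel)$.

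Next I would construct a map of total complexes $I : A^{*,*,*} \to \Omega^{*}(X_{*,*})$ via fiber integration along $\Delta^p \times \Delta^q$ (the Dupont operator). Filtering both sides by the bidegree $(p,q)$ and comparing the resulting spectral sequences, the problem reduces to the single-simplicial case already treated in \cite{Suz}: on each $p$-row the vertical face maps together with the exterior differential form the simplicial de Rham complex of $\{X_{p,*}\}$, whose total cohomology is that of $\parallel X_{p,*}\parallel$ by Theorem 2.1, and applying the same argument once more in the $p$-direction yields the full isomorphism.

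The main technical obstacle is the sign bookkeeping: one must verify that $I$ intertwines $d'$, $d''$ and $d'''$ of Definition 4.1 exactly, including the factors $(-1)^p$ and $(-1)^{p+q}$, and that the two filtrations give spectral sequences converging strongly to the same limit. Once these compatibilities are in place the argument is a direct iteration of the proof in \cite{Suz}, and the ring structure is transported through $I$ because fiber integration and the cup product on $A^{*,*,*}$ combine into the usual shuffle/wedge product on $\Omega^{*}(X_{*,*})$.
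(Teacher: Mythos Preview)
Your proposal is correct and aligns with the paper's own treatment: the paper gives no details beyond ``Repeating the same argument in \cite{Suz}'', and what you have sketched---the Dupont auxiliary complex on $\Delta^p\times\Delta^q\times X_{p,q}$, fiber integration, and a spectral-sequence comparison reducing to the simplicial case of Theorem~2.1---is precisely that iteration. The only overreach is your final remark about the ring structure, which Theorem~4.1 (unlike Theorem~2.1) does not assert; everything else is sound and matches the intended route.
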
 
$\hspace{30em} \Box $
\subsection{The equivariant simplicial de Rham complex}

When a compact Lie group $G$ acts on a manifold $M$, there is the complex of equivariant differential forms 
${\Omega}_G ^{*} (M) := ( {\Omega} ^{*} (M) \otimes S(\mathcal{G}^*))^G$ with the differential $d_G$ defined by $(d_G \alpha)(X):=(d-i_{X_M})(\alpha (X))$ (\cite{Ber} \cite{Car}).  
Here $\mathcal{G}$ is the
Lie algebra of $G$,
$S(\mathcal{G}^*)$
is the algebra of polynomial functions on $\mathcal{G}$, $\alpha \in \Omega^*_G(M), X \in \mathcal{G}$ and $X_M$
denote the vector field on $M$ generated by $X$. This is
called the Cartan Model.
We can define the double complex ${\Omega}^{*} _{G_*} (X_*)$ in the same way as in Definition 2.2. 
This double complex is originally introduced by Meinrenken in \cite{Mei}.

Again, repeating the same argument in \cite{Suz}, we obtain the following theorem.
\begin{theorem}
 If every $G_n$ is compact, there exists an isomorphism

$$ H({\Omega}_{G_*} ^{*} (X_*))  \cong H({\Omega}^{*} (X_* \rtimes NG_*(*))).$$

 Here ${\Omega}_{G_*} ^{*} (X_*)$  means the total complex.
\end{theorem}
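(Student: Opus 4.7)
The plan is to filter the two total complexes by the horizontal simplicial degree $p$ (the index along $X_*$) and compare the resulting spectral sequences column by column. At a fixed column $p$, the $E_1$ term on the right-hand side is the cohomology of the $p$-th column of the triple complex with respect to $d''+d'''$; this is exactly the simplicial de Rham complex of the simplicial manifold $X_p \rtimes NG_p(*)$. By Theorem 2.1 its cohomology is $H^*(\parallel X_p \rtimes NG_p(*) \parallel)$, and since $\parallel NG_p(*) \parallel \simeq BG_p$ this fat realisation is a model for the Borel construction $X_p \times_{G_p} EG_p$, so the $E_1$ column is the equivariant cohomology $H^*_{G_p}(X_p)$. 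On the left-hand side, the $p$-th column is the Cartan complex $\Omega^*_{G_p}(X_p)$ with differential $d_{G_p}$, whose cohomology for compact $G_p$ is again $H^*_{G_p}(X_p)$ by Cartan's equivariant de Rham theorem. Hence the $E_1$ pages are isomorphic column by column.

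I would then verify that the induced $d_1$ differentials correspond. In both spectral sequences $d_1$ is induced by $d' = \sum_i (-1)^i \varepsilon_i^*$ using the face operators $\varepsilon_i : X_p \to X_{p-1}$, which are equivariant along the group homomorphisms $\varepsilon_i : G_p \to G_{p-1}$. Because the Cartan-to-Borel comparison is natural under equivariant smooth maps (possibly along a group homomorphism), the face pullbacks intertwine the two isomorphisms at the level of $E_1$. This identifies the $E_2$ pages, and since the filtration is regular in the sense of \cite{Suz}, both spectral sequences converge to their respective total cohomologies, yielding the desired isomorphism.

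The main technical obstacle is establishing this naturality across the varying compact groups $G_p$ and manifolds $X_p$, since Cartan's theorem is usually stated for a single compact group acting on a single manifold. A clean way to handle it is to realise the comparison at the chain level by the standard quasi-isomorphism from the Cartan model into the simplicial de Rham complex of the Borel construction, constructed from connection data on $PG_p \to NG_p$, and to check directly that it commutes with $\varepsilon_i^*$. Once this naturality is in place, the spectral sequence comparison of \cite{Suz} applies verbatim to give the stated isomorphism.
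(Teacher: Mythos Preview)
Your proposal is correct and matches the paper's approach: the paper simply writes ``repeating the same argument in \cite{Suz}'', and that argument is precisely the column-by-column spectral sequence comparison you describe, invoking Cartan's equivariant de Rham theorem at each fixed $p$ to identify the $E_1$ pages and then using naturality of the face maps for $d_1$. You also correctly flag the one wrinkle that goes beyond \cite{Suz} --- there the acting group is a fixed $H$, whereas here the $G_p$ vary with $p$ and the face maps are equivariant along homomorphisms $\varepsilon_i : G_p \to G_{p-1}$ --- and your proposed chain-level Cartan map, natural in the pair $(G_p,X_p)$, is the standard way to close that gap.
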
 
$\hspace{30em} \Box $

\begin{remark}
In the case that $G_n$ is not compact, we need to use ``the Getzler model" of the equivariant cohomology in \cite{Get}.

\end{remark}

\subsection{Cocycle in the equivariant simplicial de Rham complex}
In this section we take $G=SO(4)$ and construct a cocycle in $ \Omega ^{4}_{SO(4)} (NSO(4)) $, whose cohomology is 
isomorphic to $H^*(B(SO(4) \rtimes SO(4)))$.

Recall that there is a cocycle in $ \Omega ^{4} (NSO(4)) $ described in the following way.
\begin{theorem}[\cite{Suz2}]
The cocycle which represents the Euler class of $ESO(4) \rightarrow BSO(4)$ in $ \Omega ^{4} (NSO(4)) $ is the sum of the following
$E_{1,3}$ and $E_{2,2}$:
$$
\begin{CD}
0 \\
@AA{-d}A \\
E_{1,3} \in {\Omega}^{3} (SO(4) )@>{d'}>>{\Omega}^{3} (SO(4) \times SO(4))\\
@.@AA{d}A\\
@.E_{2,2} \in {\Omega}^{2} (SO(4) \times SO(4))@>{d'}>> 0
\end{CD}
$$
$$E_{1,3} =  \frac{1}{192 \pi ^2} \sum_{\tau \in \mathfrak{S} _{4}}   {\rm sgn} (\tau)  \bigl((h^{-1}dh)_{\tau (1) \tau(2)}(h^{-1}dh)^2 _{\tau (3) \tau(4)}~~~~~~~~~~~~~~~~~~~~~~~~~~~~$$
$$~~~~~~~~~~~~~~~~~~~~~~~~~~~~~~~~~~~~~~~~~~~~~~~~~+(h^{-1}dh) _{\tau (3) \tau(4)}(h^{-1}dh)^2 _{\tau (1) \tau(2)} \bigl)$$
$$ E_{2,2} =  \frac{-1}{64 \pi ^2} \sum_{\tau \in \mathfrak{S} _{4}}   {\rm sgn} (\tau) \bigl((h_1 ^{-1}dh_1)_{\tau (1) \tau(2)}(dh_2 h_2^{-1}) _{\tau (3) \tau(4)}~~~~~~~~~~~~~~~~~~~~~~~~~~~~$$
$$~~~~~~~~~~~~~~~~~~~~~~~~~~~~~~~~~~~~~~~~~~~~~~+(h_1^{-1}dh_1) _{\tau (3) \tau(4)}(dh_2h_2^{-1}) _{\tau (1) \tau(2)} \bigl).$$

\end{theorem}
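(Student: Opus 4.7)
The plan is to apply Chern--Weil theory to the universal simplicial $SO(4)$-bundle $\gamma : PSO(4) \to NSO(4)$ using Dupont's canonical simplicial connection, and then cross-check by a direct verification of the cocycle conditions and the normalization.

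First, I would recall that Dupont's construction provides a canonical connection 1-form $\theta$ on $\|PSO(4)\|$ whose curvature $\Omega$ takes values in $\mathfrak{so}(4)$. The Pfaffian is an $\mathrm{Ad}$-invariant polynomial of degree $2$ on $\mathfrak{so}(4)$, so $\mathrm{Pf}(\Omega)$ is a closed $4$-form on $\|NSO(4)\|$ representing the Euler class of $ESO(4) \to BSO(4)$. Integrating $\mathrm{Pf}(\Omega)$ over the fiber simplices $\Delta^p$ produces a cocycle in the total complex of $\Omega^{p,q}(NSO(4))$ whose class corresponds to the Euler class under the isomorphism $H^*(\Omega^*(NSO(4))) \cong H^*(BSO(4))$ of Theorem 2.1. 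A priori the cocycle has components in bidegrees $(p,4-p)$ for $p=0,1,2$; but the $(0,4)$ piece is automatically zero because $NSO(4)(0)$ is a point.

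Next, I would compute explicitly the $(1,3)$ and $(2,2)$ components by evaluating the fiber integrals over $\Delta^1$ and $\Delta^2$. Using the fact that the pulled-back connection on $\Delta^p \times G^p$ is an affine combination of the Maurer--Cartan forms, the resulting expressions are polynomial in $h^{-1}dh$ (for $p=1$) and in $h_1^{-1}dh_1$, $dh_2 h_2^{-1}$ (for $p=2$). After expanding the Pfaffian as an antisymmetric sum over $\mathfrak{S}_4$ and performing the one- and two-simplex integrals, the normalization $(1/2\pi)^2$ from $\mathrm{Pf}(\Omega/2\pi)$ refines to $1/192\pi^2$ and $-1/64\pi^2$ respectively, which matches the stated formulas.

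As an independent check, I would verify $D(E_{1,3}+E_{2,2})=0$ component by component, where $D=d'+d''$. This breaks into three pieces: (i) $dE_{1,3}=0$ on $SO(4)$, which follows from the Maurer--Cartan equation $d(h^{-1}dh)=-(h^{-1}dh)^2$ together with the $\mathrm{Ad}$-invariance of the Pfaffian; (ii) $d'E_{2,2}=0$ on $SO(4)^3$, which reduces to the identity $(h_1 h_2)^{-1}d(h_1 h_2) = \mathrm{Ad}(h_2^{-1})(h_1^{-1}dh_1) + h_2^{-1}dh_2$ combined with the analogous expansion of $d(h_1 h_2)(h_1 h_2)^{-1}$; and (iii) the cross-term $d'E_{1,3}+d''E_{2,2}=0$ on $SO(4)^2$, which is the key identity.

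The main obstacle will be item (iii). Pulling back $E_{1,3}$ by the three face maps $\varepsilon_i : NSO(4)(2) \to NSO(4)(1)$ generates a long list of cubic monomials in $h_1^{-1}dh_1$ and $dh_2 h_2^{-1}$ coming from the binomial expansion of $(h_1 h_2)^{-1}d(h_1 h_2)$ cubed, while $d''E_{2,2}$ produces quadratic monomials multiplied by a $d$, which via the Maurer--Cartan equation become cubic as well. Matching these two expressions requires carefully tracking the sign $(-1)^p$ built into $d''$, the alternating sign in $d'$, and the regrouping of terms across the antisymmetrization over $\mathfrak{S}_4$; it is only after exploiting the full $\mathfrak{S}_4$-antisymmetry that the cubic terms cancel pairwise. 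This combinatorial cancellation is the technical heart of the argument.
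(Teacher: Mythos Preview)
The paper does not actually prove this theorem: it is quoted verbatim from the author's earlier article \cite{Suz2} and closed with a $\Box$ immediately after the statement, so there is no argument here to compare against. Your proposal supplies what the present paper omits, namely the standard Dupont/Bott--Shulman--Stasheff construction (canonical simplicial connection on $PSO(4)$, Pfaffian of its curvature, fiber integration over $\Delta^p$) followed by a direct check of the total-complex closedness; this is almost certainly the method used in \cite{Suz2} as well. Your observation that only bidegrees $(p,4-p)$ with $p\le 2$ can occur, because the Pfaffian is quadratic in the curvature and each curvature factor contributes at most one $dt_i$, is correct and explains why the diagram in the statement terminates at $E_{2,2}$. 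One minor comment: once the forms are obtained by Chern--Weil from an honest connection, closedness in the total complex is automatic, so your item~(iii) is a consistency check on the explicit constants rather than the ``technical heart'' of the proof; the genuinely delicate part is the simplex integration that produces the coefficients $1/192\pi^2$ and $-1/64\pi^2$.
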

$\hspace{30em} \Box $
\begin{errata}
{In} \cite{Suz2}, there are some mistakes. Some numbers of propositions and theorems are wrong. For example, ``Proposition 3.1"
in P.38 should be modified as ``Proposition 2.1". The cocycle in Theorem 2.2 should be written as above. 
Also, $\displaystyle [\frac{ \partial ^2}{\partial y_1  \partial y_2}b(\gamma_1,\gamma_2)]_{y_i=0}$ and $\alpha(\xi_1,\xi_2)$ should be written as follows.
$$ [\frac{ \partial ^2}{\partial y_1  \partial y_2}b(\gamma_1,\gamma_2)]_{y_i=0}= \frac{-1}{64 \pi ^2} \sum_{\tau \in \mathfrak{S} _{4}}   {\rm sgn} (\tau) \int^1_0 \Bigl(\left( \frac{\partial\xi_1(\theta)}{\partial\theta} \right)_{\tau(1)\tau(2)} \xi_2(\theta)_{\tau(3)\tau(4)}$$
$$~~~~~~~~~~~~~~~+\left( \frac{\partial\xi_1(\theta)}{\partial\theta} \right)_{\tau(3)\tau(4)} \xi_2(\theta)_{\tau(1)\tau(2)}\Bigl)d\theta.$$

$$\alpha(\xi_1,\xi_2):=\frac{-1}{64 \pi ^2} \sum_{\tau \in \mathfrak{S} _{4}} \biggl( {\rm sgn} (\tau) \cdot ~~~~~~~~~~~~~~~~~~~~~~~~~~~~~~~~~~~~~~~~~~~~~ $$
$$ \int^1_0 \Bigl( \left( \frac{\partial\xi_1(\theta)}{\partial\theta} \right)_{\tau(1)\tau(2)} \xi_2(\theta)_{\tau(3)\tau(4)} + \left( \frac{\partial\xi_1(\theta)}{\partial\theta} \right)_{\tau(3)\tau(4)} \xi_2(\theta)_{\tau(1)\tau(2)} $$
$$  - \left( \frac{\partial\xi_2(\theta)}{\partial\theta} \right)_{\tau(1)\tau(2)} \xi_1(\theta)_{\tau(3)\tau(4)} -\left( \frac{\partial\xi_2(\theta)}{\partial\theta} \right)_{\tau(3)\tau(4)} \xi_1(\theta)_{\tau(1)\tau(2)} \Bigl) d\theta \biggl).$$
\end{errata}
$\hspace{30em} \Box $

Now following Jeffrey and Weinstein's idea, we construct a cocycle in $ \Omega ^{4}_{SO(4)} (NSO(4)) $.

We take a cochain $\mu \in ( {\Omega} ^{1} (G) \otimes \mathcal{G}^*)^G$ as follows:
$$\mu(X) =  \frac{-1}{64 \pi ^2} \sum_{\tau \in \mathfrak{S} _{4}}   {\rm sgn} (\tau)  \bigl((X) _{\tau (1) \tau(2)}(h^{-1}dh) _{\tau (3) \tau(4)}+(X) _{\tau (3) \tau(4)}(h^{-1}dh)_{\tau (1) \tau(2)}\bigl)$$
$$-  \frac{1}{64 \pi ^2} \sum_{\tau \in \mathfrak{S} _{4}}   {\rm sgn} (\tau)  \bigl((X) _{\tau (1) \tau(2)}(dhh^{-1}) _{\tau (3) \tau(4)}+(X) _{\tau (3) \tau(4)}(dhh^{-1})_{\tau (1) \tau(2)}\bigl).$$
Here $X \in \mathcal{G}=\mathfrak{so}(4)$.
\begin{lemma}
$i_{X_G}E_{1,3}=d\mu (X) $
\end{lemma}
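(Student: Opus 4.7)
The plan is to expand both sides directly. Write $\theta := h^{-1}dh$, $\bar\theta := dh\,h^{-1}$, and introduce the polarized Pfaffian on $\mathfrak{so}(4)$,
$\Pi(A,B):=\sum_{\tau\in\mathfrak{S}_4}\mathrm{sgn}(\tau)\,A_{\tau(1)\tau(2)}B_{\tau(3)\tau(4)}$.
The key ingredients are: (i)~the conjugation action (Example~3.1 with $H=G=SO(4)$) has fundamental vector field $X_G(h)=Xh-hX$, giving $i_{X_G}\theta=\mathrm{Ad}(h^{-1})X-X=:Y$ and $i_{X_G}\bar\theta=X-\mathrm{Ad}(h)X$; (ii)~the Maurer--Cartan equations $d\theta=-\theta^2$ and $d\bar\theta=\bar\theta^2$; (iii)~the conjugacy $\bar\theta=h\theta h^{-1}$, and hence $\bar\theta^{\,2}=h\theta^2 h^{-1}$; and (iv)~the $\mathrm{Ad}$-invariance of $\Pi$, which combined with (iii) yields the crucial identity $\Pi(Y,\theta^2)=\Pi(X,\bar\theta^{\,2})-\Pi(X,\theta^2)$.

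For the right-hand side, since $X$ is a constant element of $\mathfrak{so}(4)$, the differential only hits the Maurer--Cartan factors, and (ii) gives
$d\mu(X)=\tfrac{1}{32\pi^2}\bigl[\Pi(X,\theta^2)-\Pi(X,\bar\theta^{\,2})\bigr]$.
For the left-hand side, $E_{1,3}=\tfrac{1}{96\pi^2}\Pi(\theta,\theta^2)$ (the two summands in its definition coincide under the alternating sum, via the double transposition $(13)(24)$). The graded Leibniz rule for $i_{X_G}$ applied to the wedge of a $1$-form and a $2$-form yields $i_{X_G}(\theta^2)_{cd}=(Y\theta-\theta Y)_{cd}=[Y,\theta]_{cd}$, so
$i_{X_G}E_{1,3}=\tfrac{1}{96\pi^2}\bigl[\Pi(Y,\theta^2)-\Pi(\theta,[Y,\theta])\bigr]$.
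Substituting (iv) puts the first summand into the same shape as the right-hand side.

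The main obstacle is the cross-term $\Pi(\theta,[Y,\theta])$, which must contribute the remaining multiple of $\Pi(X,\theta^2)-\Pi(X,\bar\theta^{\,2})$ to make the prefactors balance (one copy coming from step~2 versus the three copies needed on the right). I would split $Y=\mathrm{Ad}(h^{-1})X-X$ and treat the two resulting pieces separately, applying the infinitesimal $\mathrm{Ad}$-invariance identity $\Pi([Z,A],B)+\Pi(A,[Z,B])=0$ for each constant $Z\in\mathfrak{so}(4)$, together with the graded symmetry $\Pi(\alpha,\beta)=(-1)^{|\alpha|\,|\beta|}\Pi(\beta,\alpha)$ and the special decomposition $\mathfrak{so}(4)\cong\mathfrak{so}(3)\oplus\mathfrak{so}(3)$ that underlies the $4\times 4$ Pfaffian. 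This is the polarized-Pfaffian analogue of the classical Jeffrey--Weinstein reduction $i_{X_G}\mathrm{tr}(\theta^3)=3\mathrm{tr}((i_{X_G}\theta)\theta^2)$ for the Cartan $3$-form: cyclic invariance of the trace there is replaced by the $\mathfrak{S}_4$-antisymmetry of $\Pi$ here. Once this reduction is carried out, matching the rational constants in front completes the verification.
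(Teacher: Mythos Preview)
Your setup is correct, but the route you take diverges from the paper's and leaves the decisive step unfinished. The paper does \emph{not} contract $X_G$ directly into the $\theta$-expression for $E_{1,3}$. Instead it writes the fundamental vector field for the conjugation action as the difference of the left- and right-invariant extensions of $X$, say $X_G = X - \bar X$, so that $i_X\theta = X$ and $i_{\bar X}\bar\theta = X$ are \emph{constant} matrices. Because $E_{1,3}$ is bi-invariant it has an identical expression in $\bar\theta$ as in $\theta$; the paper then contracts $i_X$ against the $\theta$-expression and $i_{\bar X}$ against the $\bar\theta$-expression. Each contraction is a Leibniz rule on a cubic, producing (with the factor $3$ absorbed into $192\to 64$) exactly $\frac{1}{32\pi^2}\Pi(X,\theta^2)$ and $\frac{1}{32\pi^2}\Pi(X,\bar\theta^{\,2})$ respectively, which is already $d\mu(X)$. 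No cross-term of the shape $\Pi(\theta,[Y,\theta])$ ever appears.

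Your path can be made to work, but the ``main obstacle'' you flag is a genuine gap as written. The two tools you invoke first---infinitesimal $\mathrm{Ad}$-invariance $\Pi([Z,A],B)+\Pi(A,[Z,B])=0$ together with the graded symmetry $\Pi(\alpha,\beta)=(-1)^{|\alpha||\beta|}\Pi(\beta,\alpha)$---applied to $A=B=\theta$ give only the tautology $\Pi(\theta,[Z,\theta])=\Pi(\theta,[Z,\theta])$; they do not by themselves relate $\Pi(\theta,[Y,\theta])$ to $\Pi(Y,\theta^2)$. What is really needed is an associativity/cyclicity property of $\Pi$ analogous to that of the trace, and this is precisely where the $\mathfrak{so}(4)\cong\mathfrak{su}(2)\oplus\mathfrak{su}(2)$ splitting enters: under it $\Pi$ becomes a signed sum of Killing (trace) forms on the two simple factors, for which $\mathrm{tr}(\theta_\pm[Z_\pm,\theta_\pm])=-2\,\mathrm{tr}(Z_\pm\theta_\pm^2)$ is immediate from cyclicity. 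Once you use that, you get $\Pi(\theta,[Y,\theta])=-2\,\Pi(Y,\theta^2)$, hence $i_{X_G}E_{1,3}=\frac{3}{96\pi^2}\Pi(Y,\theta^2)$, and your step~(iv) finishes the job---but note that your sign convention for $X_G$ is opposite to the paper's (the paper takes $X_G=X-\bar X$ with $i_X\theta=X$, giving $i_{X_G}\theta=X-\mathrm{Ad}(h^{-1})X=-Y$), so you should track that sign through to match $d\mu(X)$ rather than $-d\mu(X)$. The paper's decomposition of the vector field, rather than of the answer, sidesteps all of this.
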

\begin{proof}
Since $i_X({g^{-1}dg})=i_{\bar{X}}({dgg^{-1}})=X$, the following equation holds.
$$i_{X_G}E_{1,3}=i_{X-\bar{X}}E_{1,3}=i_{X}E_{1,3}-i_{\bar{X}}E_{1,3}~~~~~~~~~~~~~~~~~~~~~~~~~~~~~~~~~~~~~~~~~~~~~$$
$$ =  \frac{1}{64 \pi ^2} \sum_{\tau \in \mathfrak{S} _{4}}   {\rm sgn} (\tau)  \bigl((X)_{\tau (1) \tau(2)}(h^{-1}dh)^2 _{\tau (3) \tau(4)}+(X) _{\tau (3) \tau(4)}(h^{-1}dh)^2 _{\tau (1) \tau(2)} \bigl)$$
$$ -  \frac{1}{64 \pi ^2} \sum_{\tau \in \mathfrak{S} _{4}}   {\rm sgn} (\tau)  \bigl((X)_{\tau (1) \tau(2)}(dhh^{-1})^2 _{\tau (3) \tau(4)}+(X) _{\tau (3) \tau(4)}(dhh^{-1})^2 _{\tau (1) \tau(2)} \bigl)$$
$$=d\mu (X).~~~~~~~~~~~~~~~~~~~~~~~~~~~~~~~~~~~~~~~~~~~~~~~~~~~~~~~~~~~~~~~~~~~~~~~~~~~~~~~~$$
\end{proof}

\begin{lemma}
$i_{X_{G \times G}}E_{2,2}=({\varepsilon}^*_{0}-{\varepsilon}^*_{1}+{\varepsilon}^*_{2})\mu(X)$
\end{lemma}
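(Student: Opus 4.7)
\noindent\textit{Proof plan.}
Following the pattern of Lemma~4.1, the strategy is to expand $i_{X_{G\times G}}E_{2,2}$ by the Leibniz rule for interior product and compare the result to the face pullbacks of $\mu(X)$, with the final identification carried out via the $\mathrm{Ad}$-invariance of a Pfaffian-like bilinear pairing.

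The diagonal conjugation action of $SO(4)$ on $NG(2) = G\times G$ has fundamental vector field $X_{G\times G} = (X - \bar{X},\, X - \bar{X})$ in the author's sign convention, which gives $i_{X_{G\times G}}(h_1^{-1}dh_1) = X - \mathrm{Ad}(h_1^{-1})X$ and $i_{X_{G\times G}}(dh_2\,h_2^{-1}) = \mathrm{Ad}(h_2)X - X$. Applying $i_V(\alpha\wedge\beta) = (i_V\alpha)\beta - \alpha(i_V\beta)$ to each summand $(h_1^{-1}dh_1)_{\tau(a)\tau(b)}(dh_2\,h_2^{-1})_{\tau(c)\tau(d)}$ of $E_{2,2}$ then produces four families of terms: two ``direct'' ones where constant $X$ is paired with $h_1^{-1}dh_1$ or $dh_2\,h_2^{-1}$, and two ``cross'' ones where $\mathrm{Ad}(h_1^{-1})X$ is paired with $dh_2\,h_2^{-1}$ or $\mathrm{Ad}(h_2)X$ is paired with $h_1^{-1}dh_1$.

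For the right-hand side, I would use
\[
\varepsilon_1^*(h^{-1}dh) = \mathrm{Ad}(h_2^{-1})(h_1^{-1}dh_1) + h_2^{-1}dh_2, \qquad \varepsilon_1^*(dh\,h^{-1}) = dh_1\,h_1^{-1} + \mathrm{Ad}(h_1)(dh_2\,h_2^{-1}),
\]
together with the fact that $\varepsilon_0$ and $\varepsilon_2$ project onto $h_2$ and $h_1$ respectively. In the alternating sum $\varepsilon_0^*\mu - \varepsilon_1^*\mu + \varepsilon_2^*\mu$, the $h_2^{-1}dh_2$ pieces from $\varepsilon_0^*$ and $-\varepsilon_1^*$ cancel exactly, and similarly the $dh_1\,h_1^{-1}$ pieces from $\varepsilon_2^*$ and $-\varepsilon_1^*$. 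What survives is $X$ paired against $h_1^{-1}dh_1 - \mathrm{Ad}(h_2^{-1})(h_1^{-1}dh_1) + dh_2\,h_2^{-1} - \mathrm{Ad}(h_1)(dh_2\,h_2^{-1})$.

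To identify the two sides, the essential tool is the $\mathrm{Ad}(SO(4))$-invariance of the symmetric bilinear pairing $B(A,B) := \sum_{\tau\in\mathfrak{S}_4}\mathrm{sgn}(\tau)\,A_{\tau(1)\tau(2)}\,B_{\tau(3)\tau(4)}$ on $4\times 4$ antisymmetric matrices, which is the polarization of the Pfaffian. Invariance yields the transfer identity $B(\mathrm{Ad}(g^{-1})X, Y) = B(X, \mathrm{Ad}(g)Y)$, moving each $\mathrm{Ad}$ from the $X$-argument in the cross terms of the LHS onto the corresponding Maurer-Cartan factor. Together with the relabeling of $\tau$ by $(13)(24)$ used to swap the index pairs $(12)$ and $(34)$, this converts the LHS into precisely the RHS expression computed above. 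The main obstacle is this $\mathrm{Ad}$-transfer step: without invoking the invariance of $B$, the LHS carries $\mathrm{Ad}$'s on the constants $X$ while the RHS carries them on Maurer-Cartan forms, so the two cannot be compared term by term. The remaining work is routine sign and coefficient checking.
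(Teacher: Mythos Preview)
Your plan is correct and matches the paper's computation essentially step for step: the paper expands $(\varepsilon_0^*-\varepsilon_1^*+\varepsilon_2^*)\mu(X)$, obtains the same four surviving blocks you describe after the $h_2^{-1}dh_2$ and $dh_1h_1^{-1}$ cancellations, and then identifies them with $i_{X_1}E_{2,2}+i_{X_2}E_{2,2}-i_{\bar X_1}E_{2,2}-i_{\bar X_2}E_{2,2}$. The only difference is one of emphasis: you make explicit the $\mathrm{Ad}$-invariance of the Pfaffian pairing $B(A,B)=\sum_\tau\mathrm{sgn}(\tau)A_{\tau(1)\tau(2)}B_{\tau(3)\tau(4)}$ needed to move the conjugations from $X$ onto the Maurer--Cartan forms, whereas the paper uses this transfer silently when matching the two cross terms.
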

\begin{proof}
$({\varepsilon}^*_{0}-{\varepsilon}^*_{1}+{\varepsilon}^*_{2})\mu(X)$
$$ = \frac{-1}{64 \pi ^2} \sum_{\tau \in \mathfrak{S} _{4}}   {\rm sgn} (\tau)  \bigl((X) _{\tau (1) \tau(2)}(h_2^{-1}dh_2) _{\tau (3) \tau(4)}+(X) _{\tau (3) \tau(4)}(h_2^{-1}dh_2)_{\tau (1) \tau(2)}\bigl)$$
$$-  \frac{1}{64 \pi ^2} \sum_{\tau \in \mathfrak{S} _{4}}   {\rm sgn} (\tau)  \bigl((X) _{\tau (1) \tau(2)}(dh_2h_2^{-1}) _{\tau (3) \tau(4)}+(X) _{\tau (3) \tau(4)}(dh_2h_2^{-1})_{\tau (1) \tau(2)}\bigl)$$

$$ + \frac{1}{64 \pi ^2} \sum_{\tau \in \mathfrak{S} _{4}}   {\rm sgn} (\tau)  \bigl((X) _{\tau (1) \tau(2)}(h_2^{-1}h_1 ^{-1} dh_1h_2+h_2^{-1}dh_2) _{\tau (3) \tau(4)}~~~~~~~~~~~~~~~~~$$
$$~~~~~~~~~~~~~~~~~~~~~~~~~~~~~~~~~~~~~~+(X) _{\tau (3) \tau(4)}(h_2^{-1}h_1 ^{-1} dh_1h_2+h_2^{-1}dh_2)_{\tau (1) \tau(2)}\bigl)$$
$$+  \frac{1}{64 \pi ^2} \sum_{\tau \in \mathfrak{S} _{4}}   {\rm sgn} (\tau)  \bigl((X) _{\tau (1) \tau(2)}(dh_1 h_1^{-1}+h_1dh_2h_2 ^{-1}h_1^{-1}) _{\tau (3) \tau(4)}~~~~~~~~~~~~~~~~~$$
$$~~~~~~~~~~~~~~~~~~~~~~~~~~~~~~~~~~~~~~+(X) _{\tau (3) \tau(4)}(dh_1 h_1^{-1}+h_1dh_2h_2 ^{-1}h_1^{-1})_{\tau (1) \tau(2)}\bigl)$$

$$ - \frac{1}{64 \pi ^2} \sum_{\tau \in \mathfrak{S} _{4}}   {\rm sgn} (\tau)  \bigl((X) _{\tau (1) \tau(2)}(h_1^{-1}dh_1) _{\tau (3) \tau(4)}+(X) _{\tau (3) \tau(4)}(h_1^{-1}dh_1)_{\tau (1) \tau(2)}\bigl)$$
$$-  \frac{1}{64 \pi ^2} \sum_{\tau \in \mathfrak{S} _{4}}   {\rm sgn} (\tau)  \bigl((X) _{\tau (1) \tau(2)}(dh_1h_1^{-1}) _{\tau (3) \tau(4)}+(X) _{\tau (3) \tau(4)}(dh_1h_1^{-1})_{\tau (1) \tau(2)}\bigl).$$
$$=  \frac{-1}{64 \pi ^2} \sum_{\tau \in \mathfrak{S} _{4}}   {\rm sgn} (\tau)  \bigl((X) _{\tau (1) \tau(2)}(dh_2h_2^{-1}) _{\tau (3) \tau(4)}+(X) _{\tau (3) \tau(4)}(dh_2h_2^{-1})_{\tau (1) \tau(2)}\bigl)$$
$$ + \frac{1}{64 \pi ^2} \sum_{\tau \in \mathfrak{S} _{4}}   {\rm sgn} (\tau)  \bigl((X) _{\tau (1) \tau(2)}(h_2^{-1}h_1 ^{-1} dh_1h_2) _{\tau (3) \tau(4)}~~~~~~~~~~~~~~~~~$$
$$~~~~~~~~~~~~~~~~~~~~~~~~~~~~~~~~~~~~~~+(X) _{\tau (3) \tau(4)}(h_2^{-1}h_1 ^{-1} dh_1h_2)_{\tau (1) \tau(2)}\bigl)$$
$$+  \frac{1}{64 \pi ^2} \sum_{\tau \in \mathfrak{S} _{4}}   {\rm sgn} (\tau)  \bigl((X) _{\tau (1) \tau(2)}(h_1dh_2h_2 ^{-1}h_1^{-1}) _{\tau (3) \tau(4)}~~~~~~~~~~~~~~~~~$$
$$~~~~~~~~~~~~~~~~~~~~~~~~~~~~~~~~~~~~~~+(X) _{\tau (3) \tau(4)}(h_1dh_2h_2 ^{-1}h_1^{-1})_{\tau (1) \tau(2)}\bigl)$$
$$ - \frac{1}{64 \pi ^2} \sum_{\tau \in \mathfrak{S} _{4}}   {\rm sgn} (\tau)  \bigl((X) _{\tau (1) \tau(2)}(h_1^{-1}dh_1) _{\tau (3) \tau(4)}+(X) _{\tau (3) \tau(4)}(h_1^{-1}dh_1)_{\tau (1) \tau(2)}\bigl)$$
$$= i_{X_1}E_{2,2} + i_{X_2}E_{2,2}-i_{\bar{X}_1}E_{2,2}-i_{\bar{X}_2}E_{2,2}= i_{X_1-\bar{X}_1+X_2-\bar{X}_2}E_{2,2}=i_{X_{G \times G}}E_{2,2}.$$

\end{proof}

\begin{lemma}
$-i_{X_G}\mu(X)=0$
\end{lemma}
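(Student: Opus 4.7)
The strategy is a direct computation combined with an $\operatorname{Ad}$-invariance argument. First I observe that $(X)_{ij}$ is a polynomial in the Cartan parameter (in particular a $0$-form on $G$), so $i_{X_G}$ acts only on the Maurer--Cartan factors appearing in $\mu(X)$. Using the contraction identities established in the proof of Lemma~4.1, namely
$$i_{X_G}(h^{-1}dh) = X - \operatorname{Ad}_{h^{-1}}X, \qquad i_{X_G}(dhh^{-1}) = \operatorname{Ad}_hX - X,$$
the pure-$X$ contributions coming from the $h^{-1}dh$ and $dhh^{-1}$ halves of $\mu(X)$ cancel against each other (they enter with opposite signs), leaving
$$-i_{X_G}\mu(X) = \frac{1}{64\pi^2}\sum_{\tau\in\mathfrak{S}_4}\operatorname{sgn}(\tau)\bigl[(X)_{\tau(1)\tau(2)}Y_{\tau(3)\tau(4)} + (X)_{\tau(3)\tau(4)}Y_{\tau(1)\tau(2)}\bigr],$$
where $Y := \operatorname{Ad}_hX - \operatorname{Ad}_{h^{-1}}X \in \mathfrak{so}(4)$.

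Next I would recognize the remaining sum as a multiple of the symmetric polarized Pfaffian on $\mathfrak{so}(4)$. For antisymmetric $4\times 4$ matrices $A, B$, the reindexing $\tau \mapsto \tau\circ(1\,3)(2\,4)$ (an even permutation) shows that the two bracketed terms coincide after summation, and the common value is, up to a universal constant, the polarization $P(A,B)$ of the Pfaffian $\operatorname{Pf}\colon \mathfrak{so}(4) \to \mathbb{R}$. In particular $P$ is symmetric in its arguments and $SO(4)$-invariant under the diagonal adjoint action.

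Combining these, $\operatorname{Ad}$-invariance together with symmetry give
$$P(X,\operatorname{Ad}_hX) = P(\operatorname{Ad}_{h^{-1}}X, X) = P(X,\operatorname{Ad}_{h^{-1}}X),$$
so $P(X, Y) = 0$ and hence $-i_{X_G}\mu(X) = 0$. The main obstacle is the combinatorial bookkeeping: verifying that the pure-$X$ contributions of the two halves really enter with opposite signs, and that the signed $\mathfrak{S}_4$-sum really is the symmetric polarization of $\operatorname{Pf}$ (rather than, say, an alternating bilinear that would vanish on its own and give $0$ for trivial reasons). Once these checks are in place, the vanishing is forced by the $\operatorname{Ad}$-invariance of $\operatorname{Pf}$ and no further input is needed.
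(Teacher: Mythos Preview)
Your argument is correct and follows essentially the same route as the paper: a direct contraction of $\mu(X)$ using $i_{X_G}(h^{-1}dh)=X-\operatorname{Ad}_{h^{-1}}X$ and $i_{X_G}(dhh^{-1})=\operatorname{Ad}_hX-X$, after which the pure-$X$ pieces cancel and the remaining $\operatorname{Ad}_hX$ and $\operatorname{Ad}_{h^{-1}}X$ contributions cancel by the $\operatorname{Ad}$-invariance (and symmetry) of the polarized Pfaffian. The only difference is organizational: the paper splits $i_{X_G}=i_X-i_{\bar X}$ and lists the four resulting terms before declaring the sum zero, whereas you package the surviving piece as $P(X,\operatorname{Ad}_hX-\operatorname{Ad}_{h^{-1}}X)$ and make the $\operatorname{Ad}$-invariance step explicit---which is in fact the step the paper leaves to the reader.
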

\begin{proof}
$-i_{X_G}\mu(X)=-i_{X-\bar{X}}\mu(X)$
$$  =\frac{1}{64 \pi ^2} \sum_{\tau \in \mathfrak{S} _{4}}   {\rm sgn} (\tau)  \bigl((X) _{\tau (1) \tau(2)}(X) _{\tau (3) \tau(4)}+(X) _{\tau (3) \tau(4)}(X)_{\tau (1) \tau(2)}\bigl)$$
$$+  \frac{1}{64 \pi ^2} \sum_{\tau \in \mathfrak{S} _{4}}   {\rm sgn} (\tau)  \bigl((h^{-1}Xh) _{\tau (1) \tau(2)}(X) _{\tau (3) \tau(4)}+(h^{-1}Xh) _{\tau (3) \tau(4)}(X)_{\tau (1) \tau(2)}\bigl)$$
$$ - \frac{1}{64 \pi ^2} \sum_{\tau \in \mathfrak{S} _{4}}   {\rm sgn} (\tau)  \bigl((hXh^{-1}) _{\tau (1) \tau(2)}(X) _{\tau (3) \tau(4)}+(hXh^{-1}) _{\tau (3) \tau(4)}(X)_{\tau (1) \tau(2)}\bigl)$$
$$-  \frac{1}{64 \pi ^2} \sum_{\tau \in \mathfrak{S} _{4}}   {\rm sgn} (\tau)  \bigl((X) _{\tau (1) \tau(2)}(X) _{\tau (3) \tau(4)}+(X) _{\tau (3) \tau(4)}(X)_{\tau (1) \tau(2)}\bigl)=0.$$
\end{proof}

As a result, we obtain the following theorem.
\begin{theorem}
$E_{1,3}+E_{2,2}+\mu$ is a cocycle in $ \Omega ^{4}_{SO(4)} (NSO(4)) $.
\end{theorem}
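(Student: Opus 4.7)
The plan is to apply the total differential $D = d' + (-1)^p d_G$ of the complex $\Omega^*_{SO(4)}(NSO(4))$ to the cochain $\omega := E_{1,3} + E_{2,2} + \mu$, where $d_G\alpha = d\alpha - i_{X_M}\alpha$ is the Cartan differential. All three summands lie in total degree $4$: $E_{1,3}$ has tridegree $(p,q,r) = (1,3,0)$, $E_{2,2}$ has tridegree $(2,2,0)$, and $\mu$ has tridegree $(1,1,2)$. Since $d'$ raises $p$ by one, $d$ raises $q$ by one, and $i_{X_M}$ drops $q$ by one while raising $r$ by two, $D\omega$ splits into pieces supported in five distinct tridegrees of total degree $5$, and I would verify the vanishing tridegree by tridegree.

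In the purely non-equivariant tridegrees $(1,4,0)$, $(2,3,0)$, $(3,2,0)$ only the classical contributions from $E_{1,3}$ and $E_{2,2}$ appear, namely $-dE_{1,3}$, $d'E_{1,3}+dE_{2,2}$, and $d'E_{2,2}$ respectively, and these cancel by Theorem 4.4, which asserts that $E_{1,3}+E_{2,2}$ is a cocycle in the ordinary simplicial de Rham complex of $NSO(4)$. In tridegree $(1,2,2)$ the contribution is $i_{X_G}E_{1,3} - d\mu(X)$, which vanishes by Lemma 4.1. In tridegree $(2,1,2)$ the contribution is $-i_{X_{G\times G}}E_{2,2} + (\varepsilon_0^*-\varepsilon_1^*+\varepsilon_2^*)\mu(X)$, which vanishes by Lemma 4.2. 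Finally, in tridegree $(1,0,4)$ the only contribution is the self-contraction $i_{X_G}\mu(X)$, which vanishes by Lemma 4.3.

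Thus the proof amounts to bookkeeping: the three preparatory lemmas were engineered precisely to match the three mixed-degree components of $D\omega$ arising from the interaction of $d_G = d - i_{X_M}$ with $\mu$, while Theorem 4.4 accounts for the non-equivariant slice. The main potential obstacle is sign conventions --- the factor $(-1)^p$ in $d''$, and the alternating sum defining $d'$ on a $p$-simplicial cochain --- but a careful tabulation by tridegree, cross-checked against the sign conventions adopted in Lemmas 4.1 and 4.2, makes the cancellation transparent and the theorem immediate.
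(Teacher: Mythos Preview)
Your argument is correct and is exactly the bookkeeping the paper leaves implicit when it says ``As a result, we obtain the following theorem'' after Lemmas 4.1--4.3; the decomposition by tridegree and the matching of each piece to the corresponding lemma or to the ordinary simplicial cocycle condition is precisely what is needed. One small correction: the reference you call ``Theorem 4.4'' (the statement that $E_{1,3}+E_{2,2}$ is already a cocycle in the non-equivariant simplicial de Rham complex) is Theorem 4.3 in the paper's numbering; Theorem 4.4 is the very statement you are proving.
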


National Institute of the Technology, Akita College, 1-1, Iijima Bunkyo-cho, Akita-shi, Akita-ken, Japan. \\
e-mail: nysuzuki@akita-nct.ac.jp
\end{document}